\newcommand{\beas}{\begin{eqnarray*}}
\newcommand{\eeas}{\end{eqnarray*}}
\newcommand{\bea}{\begin{eqnarray}}
\newcommand{\eea}{\end{eqnarray}}
\newcommand{\beq}{\begin{equation}}
\newcommand{\eeq}{\end{equation}}
\newcommand{\ben}{\begin{enumerate}}
\newcommand{\een}{\end{enumerate}}
\newtheorem{theorem}{Theorem}[section]
\newtheorem{lemma}[theorem]{Lemma}
\newtheorem{conjecture}[theorem]{Conjecture}
\theoremstyle{definition}
\newtheorem{remark}[theorem]{Remark}
\definecolor{darkblue}{rgb}{0,0,0.6}
\author[Richard P. Stanley]{Richard P. Stanley}
\address{Department of Mathematics, MIT, Cambridge, MA 02139}
\email{rstan@math.mit.edu}
\author[Fabrizio Zanello]{Fabrizio Zanello}
\address{Department of Mathematical Sciences, Michigan Tech, Houghton, MI 49931}
\email{zanello@mit.edu}
\title[A generalization of a 1998 unimodality conjecture of Reiner and Stanton]{A generalization of a 1998 unimodality conjecture\\of Reiner and Stanton}
\begin{document}

\begin{abstract} 
An interesting, and still wide open, conjecture of Reiner and Stanton predicts that certain ``strange'' symmetric differences of $q$-binomial coefficients are always nonnegative and unimodal. We extend their conjecture to a broader, and perhaps more natural, framework, by conjecturing that, for each $k\ge 5$, the polynomials
$$f(k,m,b)(q)=\binom{m}{k}_q-q^{\frac{k(m-b)}{2}+b-2k+2}\cdot\binom{b}{k-2}_q$$
are nonnegative and unimodal for all $m\gg_k 0$ and $b\le \frac{km-4k+4}{k-2}$ such that $kb\equiv km$ (mod 2), with the only exception of $b=\frac{km-4k+2}{k-2}$ when this is an integer.

Using the KOH theorem, we combinatorially show the case $k=5$. In fact, we completely characterize the nonnegativity and unimodality of $f(k,m,b)$ for $k\le 5$. (This also provides an isolated counterexample to Reiner-Stanton's conjecture when $k=3$.) Further, we prove that, for each $k$ and $m$, it suffices to show our conjecture for the largest $2k-6$ values of $b$.
\end{abstract}

\keywords{$q$-binomial coefficient; Gaussian polynomial; unimodality; KOH theorem; positivity}
\subjclass[2010]{Primary: 05A15; Secondary: 05A19, 05A17}

\maketitle

\section{Introduction and statement of the conjecture} 

An intriguing conjecture of Reiner and Stanton, that has remained open for nearly 20 years, predicts the nonnegativity and unimodality of certain symmetric differences of $q$-binomial coefficients (see \cite{RS}, Conjecture 9 or \cite{Stanton}, Conjecture 7). The goal of this note is to frame their admittedly ``strange'' statement into a broader and more natural combinatorial setting, and show, by means of Zeilberger's KOH theorem, several special cases of our conjecture. 

For integers $m\ge k\ge 0$, define the \emph{$q$-binomial coefficient}
$$\binom {m}{k}_q=\frac{(1-q)(1-q^2)\cdots (1-q^{m})}{(1-q)(1-q^2)\cdots (1-q^k)\cdot (1-q)(1-q^2)\cdots (1-q^{{m-k}})}.$$
It is a well-known fact in combinatorics that $\binom {m}{k}_q$ is a unimodal, symmetric polynomial in $q$ of degree $k(m-k)$ with nonnegative integer coefficients (see e.g. \cite{OH,Pr1,St5,Sy}). 

Now let
$$f(k,m,b)(q)=\binom{m}{k}_q-q^{\frac{k(m-b)}{2}+b-2k+2}\cdot\binom{b}{k-2}_q,$$
for integers $k\ge 2$, $m\ge k$, and $k-2\le b\le \frac{km-4k+4}{k-2}$ for $k\ge 3$, where $k(m-b)$ is even. 

Note that $f(k,m,b)$ is also a symmetric polynomial, since both terms in the difference are symmetric about the same degree, $k(m-k)/2$.

\begin{conjecture}\label{conj}
Let $k$, $m$, and $b$ be as above. We have:
\begin{itemize}
\item[ (i)] If $k=2$, then $f(k,m,b)$ is nonnegative and unimodal if and only if $m$ is even.

\item[ (ii)] If $k=3$, then $f(k,m,b)$ is nonnegative and unimodal if and only if $b\ne 3m-10$ and: $b\neq 2$ if $m$ is even; $b\neq 1,5$ if $m\equiv 1$ (mod 4); and $b\neq 3$ if $m\equiv 3$ (mod 4).

\item[ (iii)] If $k=4$, then $f(k,m,b)$ is nonnegative and unimodal if and only if $b$ is even and $m\neq 5$.

\item[ (iv)] If $k\ge 5$, then $f(k,m,b)$ is nonnegative and unimodal for all $m\gg_k 0$ and all $b$, with the only exception of $b=\frac{km-4k+2}{k-2}$ when this is an integer (i.e., when $k-2$ divides $2m-6$).
\end{itemize}
\end{conjecture}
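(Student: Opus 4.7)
My plan is to use Zeilberger's KOH theorem as the main combinatorial engine. Recall that KOH (the ``$q$-analog of O'Hara'') gives an explicit, manifestly unimodal decomposition
\[
\binom{m}{k}_q = \sum_{\lambda \vdash k} q^{e_\lambda}\, \prod_{i}\binom{A_{\lambda,i}}{B_{\lambda,i}}_{q^{2}},
\]
indexed by partitions of $k$, each summand being a symmetric unimodal polynomial centered at $k(m-k)/2$. Since $f(k,m,b)$ is itself symmetric about that same center, the overall strategy is to isolate one (or a few) distinguished KOH summand(s) whose ``spine'' is $\binom{b}{k-2}_{q}$ up to the correct monomial shift, so that the subtraction in $f(k,m,b)$ is absorbed within that single summand and the remaining summands of the KOH expansion provide a manifestly unimodal residue.

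For the small-$k$ cases (i)--(iii), $\binom{m}{k}_q$ has essentially explicit coefficients (piecewise quasi-polynomial in the exponent of $q$), so I would proceed by direct inspection. For $k=2$, $f(2,m,b)=\binom{m}{2}_q - q^{m-2}$ amounts to subtracting $1$ from the middle coefficient, and the parity dichotomy in $m$ is immediate. For $k=3$ and $k=4$ a finite case analysis on residue classes of $m$ (and of $b$) modulo small integers should yield the exact list of forbidden $b$; in particular, the ``isolated counterexample'' to the Reiner--Stanton conjecture should pop out as one of the discrete failure points in the $k=3$ case.

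For $k=5$, which is the main substantive case, the plan is to identify in the KOH decomposition a specific partition $\lambda^{*}$ whose associated product contains the factor $\binom{b}{k-2}_{q^{2}}$, shifted by exactly $q^{k(m-b)/2+b-2k+2}$. Subtracting the shifted $\binom{b}{k-2}_q$ from that single KOH summand leaves, after a short computation, a nonnegative symmetric unimodal polynomial (for instance, a positive sum of $q^{2}$-Gaussians), so the total $f(k,5,m,b)$ is still a sum of symmetric unimodal polynomials centered at $k(m-k)/2$, hence nonnegative and unimodal. The boundary value $b=\frac{km-4k+2}{k-2}$ excluded in (iv) corresponds precisely to the configuration where the matching shift is off by the minimum possible amount and the would-be residue picks up a negative middle coefficient; a direct check verifies that unimodality indeed fails there.

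For the reduction to the top $2k-6$ values of $b$, the idea is to prove an identity of the form
\[
f(k,m,b)(q)-f(k,m,b+2)(q)=q^{\alpha}\, h(q),
\]
with $h$ an \emph{a priori} nonnegative symmetric unimodal polynomial centered at $k(m-k)/2$, arising from a Pascal-style recurrence applied to $\binom{b+2}{k-2}_q - q^{\,?}\binom{b}{k-2}_q$ after accounting for the exponent $\frac{k(m-b)}{2}+b-2k+2$. Since the sum of two symmetric unimodal polynomials with the same center is again symmetric unimodal, unimodality propagates downward in $b$ by induction. The principal obstacle --- and presumably the reason the conjecture is open for $k\ge 6$ --- is that the number and combinatorial complexity of KOH summands grow quickly with $k$, and the ``single summand to match'' trick available for $k=5$ does not transparently generalize: the shifted $\binom{b}{k-2}_q$ may straddle several KOH summands, and controlling the residue seems to require a finer manifestly unimodal identity than KOH alone supplies.
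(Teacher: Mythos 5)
Your overall architecture---work with truncated first differences, use KOH to write things as sums of symmetric unimodal polynomials centered at $k(m-k)/2$, handle $k\le 4$ by inspection, and reduce the range of $b$ by induction---is the same as the paper's, and your $k=2$ argument is fine. But the two steps that carry the real weight do not go through as you state them. For $k=5$, there is no KOH summand of $\binom{m}{5}_q$ that ``contains the factor $\binom{b}{3}_q$'' with the shift $q^{(5m-3b-16)/2}$: for instance, when $m=6n$ the only summand with a $\binom{\cdot}{3}_q$ factor is $q^{12}\binom{6n-8}{3}_q\binom{12n-15}{1}_q$ (from the partition $(4,1)$), while the relevant values of $b$ are $10n-8$ and $10n-10$, so the subtracted term cannot be absorbed inside a single summand and the ``residue'' you describe does not exist. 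What does work is a first-difference \emph{domination}: expand $(1-q)q^{\mathrm{shift}}\binom{b}{3}_q$ explicitly by iterating KOH on $\binom{b}{3}_q$ itself, and verify coefficientwise that it is bounded above by the first difference of the single summand $q^{8}\binom{6n-8}{1}_q\binom{12n-14}{2}_q$ contributed by the partition $(3,2)$, with the $(4,1)$ summand brought in only to rescue the middle coefficient in three of the twelve residue cases. That comparison is a genuine computation, not a formal matching, and it is the heart of the $k=5$ proof.

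The proposed reduction $f(k,m,b)-f(k,m,b+2)=q^{\alpha}h(q)$ with $h$ nonnegative, symmetric and unimodal is false in general. It amounts to $(1-q)\binom{b+2}{k-2}_q\ge(1-q)q^{k-2}\binom{b}{k-2}_q$; for $k=3$ the difference is $\binom{b+2}{1}_q-q\binom{b}{1}_q=1+q^{b+1}$, which is not unimodal, and indeed part (ii) of the statement itself shows unimodality does not propagate downward in steps of $2$ (for $m$ even it holds at $b=4$ but fails at $b=2$). Nor does Pascal's rule produce your identity: iterating it gives $\binom{b+2}{k-2}_q=\binom{b+1}{k-3}_q+q^{k-2}\binom{b}{k-3}_q+q^{2(k-2)}\binom{b}{k-2}_q$, whose last exponent is $2(k-2)$, not $k-2$. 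The reduction that can actually be proved steps $b$ down by $2k-6$: then the two shift exponents differ by exactly $2\binom{k-2}{2}$, which is precisely the monomial prefactor of the partition-$(k-2)$ term in the KOH decomposition of $\binom{b}{k-2}_q$, so $(1-q)\binom{b}{k-2}_q\ge(1-q)q^{2\binom{k-2}{2}}\binom{b-2k+6}{k-2}_q$ follows because every other KOH summand has nonnegative truncated first difference. With a step of $2k-6$ rather than $2$, one is left with the top $2k-6$ values of $b$ in each residue class of $m$, and for $k=5$ those finitely many cases still require the explicit domination argument above; your sketch as written closes neither of these gaps.
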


Notice that, when $b=\frac{km-4k+2}{k-2}$,  the $q$-binomial coefficient $\binom{b}{k-2}_q$ is shifted by exactly one degree, and therefore, for $m$ large enough, $\binom{m}{k}_q-q\cdot\binom{b}{k-2}_q=1+0q+q^2+\dots$ is never unimodal.

Hence, what our conjecture claims is essentially that, for any $k\ge 5$, $f(k,m,b)$ is unimodal ``as often as possible,'' provided that $m$ be sufficiently large. The small values of $m$, perhaps not unexpectedly, may still allow the shifted first difference of $\binom{b}{k-2}_q$ to grow faster than that of $\binom{m}{k}_q$. Computationally, however, these values do not appear to be too large relatively to $k$. (For instance, it seems safe to assume $m\ge 20$ for $k=5$; $m\ge 32$ for $k=6$; $m\ge 18$ for $k=7$ and $8$; $m\ge 20$ for $k=9$; $m\ge 24$ for $k=10$; etc..)

Also note that Reiner-Stanton's conjecture (\cite{RS}, Conjecture 9) essentially corresponds to the special case $m$ even, $b\ge m-4$, and $b\equiv m$ (mod 4) of our conjecture. Their Theorems 1 and 5 correspond to the cases $b=m$ even, and $b=m-2$ with $m-k$ even, respectively.

Our main results will be a proof of parts (i) to (iii) of  our conjecture, and  of part (iv) for $k= 5$. As a pithy application of the KOH theorem, we will also show that in order to prove the conjecture for any given $k$ and $m$, it suffices to do so for the \emph{$2k-6$ largest values} of $b$ (or for only  $k-3$ such values when $k$ is odd). In general, however, our conjecture remains open.

We finally note that the original conjecture of Reiner and Stanton has a counterexample when $k=3$, corresponding to their parameters $n=7$ and $r=0$ (for us, $m=6$ and $b=2$), which give the nonunimodal polynomial
$$\binom{6}{3}_q-q^4\binom{2}{1}_q=1+q+2q^2+3q^3+2q^4+2q^5+3q^6+2q^7+q^8+q^9.$$
Our results imply that, for $k\le 5$, this is the unique counterexample to their conjecture. 

\section{Proofs}

We begin with a crucial lemma, namely Zeilberger's KOH theorem \cite{Zei}. It provides an algebraic reformulation of O'Hara's combinatorial proof of the unimodality of $q$-binomial coefficients \cite{OH}, by decomposing $\binom{a+k}{k}_q$ into suitable finite sums of nonnegative, unimodal polynomials, all symmetric about degree $ak/2$.  

We fix  positive integers $a$ and $k$, and for any  partition $\lambda=(\lambda_1,\lambda_2,\dots)\vdash k$, define $Y_i= \sum_{1\le j\le i}\lambda_j$ for $i\geq 1$, and $Y_0=0$. Then:
\begin{lemma} [\cite{Zei}]\label{koh}
We have $\binom{a+k}{k}_q=\sum_{\lambda\vdash k}F_{\lambda}(q)$, where
$$F_{\lambda}(q)= q^{2\sum_{i\geq 1}\binom{\lambda_i}{2}} \prod_{j\geq 1} \binom{j(a+2)-Y_{j-1}-Y_{j+1}}{\lambda_j-\lambda_{j+1}}_q.$$
\end{lemma}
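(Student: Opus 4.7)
The plan is combinatorial, reproducing in spirit O'Hara's original argument for the unimodality of $q$-binomials that Zeilberger then algebraized. I would interpret $\binom{a+k}{k}_q$ as the generating function $\sum_{\mu \subseteq (a^k)} q^{|\mu|}$ over partitions $\mu$ contained in the $a \times k$ rectangle, and then partition this family into classes indexed by $\lambda \vdash k$ whose area-generating functions coincide with the $F_\lambda(q)$.

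For the key construction, I would assign to each $\mu \subseteq (a^k)$ a \emph{type} $\lambda(\mu) \vdash k$ encoding a canonical nested-rectangle decomposition of $\mu$: the parts of $\lambda$ record the staircase profile of $\mu$, and $\lambda_j - \lambda_{j+1}$ counts the rows making up the $j$-th layer. Writing $\mu$ uniquely as the union of a minimal ``double staircase,'' of area exactly $2\sum_{i\ge 1}\binom{\lambda_i}{2}$, together with free sub-partitions living inside each layer, one would then show that the $j$-th such free partition is constrained to lie in a rectangle whose area generating function is precisely $\binom{j(a+2)-Y_{j-1}-Y_{j+1}}{\lambda_j-\lambda_{j+1}}_q$. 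Multiplying the monomial from the staircase by these $q$-binomial factors yields $F_\lambda(q)$, and summing over $\lambda\vdash k$ gives the identity provided the classes $\{\mu:\lambda(\mu)=\lambda\}$ are shown to partition all $\mu\subseteq (a^k)$ without overlap.

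The main obstacle will be pinning down the exact dimensions of the $j$-th free rectangle. The contribution $j(a+2)$ captures the gross available room at level $j$ -- the ``$+2$'' reflecting that the two staircase flanks each grow by one unit per level -- while $-Y_{j-1}-Y_{j+1}$ subtracts the space consumed by the adjacent layers of index $j-1$ and $j+1$. The conspicuous absence of the middle term $Y_j$ is the most delicate feature, and corresponds to the fact that layer $j$'s own rows are precisely the free ones being parametrized, so one must not double-count them. An alternative route would be a direct algebraic induction on $k$ via the $q$-Pascal recursion $\binom{a+k}{k}_q=\binom{a+k-1}{k}_q+q^{a}\binom{a+k-1}{k-1}_q$ matched against an analogous recursion for $\sum_{\lambda\vdash k}F_\lambda(q)$; this would require splitting partitions of $k$ according to whether they contain a designated part of size $1$, and I expect it to be more opaque than the direct combinatorial bijection, though perhaps easier to verify mechanically.
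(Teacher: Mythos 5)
This lemma is not proved in the paper at all: it is the KOH theorem, imported verbatim from Zeilberger's article \cite{Zei} (which in turn algebraizes O'Hara's construction \cite{OH}), so there is no internal argument to compare yours against. Judged on its own, your proposal is a plan rather than a proof, and the gap is precisely where the theorem lives. You posit a ``type'' map $\mu \mapsto \lambda(\mu)$ and a canonical decomposition of $\mu$ into a double staircase of area $2\sum_i\binom{\lambda_i}{2}$ plus free sub-partitions in prescribed rectangles, but you never define the map, never prove the classes $\{\mu : \lambda(\mu)=\lambda\}$ are disjoint and exhaustive, and never derive the rectangle dimensions $j(a+2)-Y_{j-1}-Y_{j+1}$ --- you only offer a heuristic for the ``$+2$'' and the subtracted terms and explicitly flag this as ``the main obstacle.'' That obstacle \emph{is} O'Hara's structure theorem; her actual construction is a delicate recursive surgery on partitions (not a straightforward staircase-profile reading), and the correctness of the exponent and of the product of $q$-binomials is the entire content of the result. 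Asserting that such a decomposition ``would then be shown'' to work leaves nothing actually established.

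Your fallback suggestion --- induction on $k$ via the $q$-Pascal recursion $\binom{a+k}{k}_q=\binom{a+k-1}{k}_q+q^{a}\binom{a+k-1}{k-1}_q$ matched against a recursion for $\sum_{\lambda\vdash k}F_\lambda(q)$ --- also does not go through as described: the terms $F_\lambda(q)$ depend on $a$ through every factor $\binom{j(a+2)-Y_{j-1}-Y_{j+1}}{\lambda_j-\lambda_{j+1}}_q$ simultaneously, and splitting $\lambda$ according to a part equal to $1$ does not align the two sides of $q$-Pascal in any evident way; no such simple recursive proof of the KOH identity is known. For the purposes of this paper the honest course is to cite \cite{Zei} (as the authors do); if you want a self-contained proof you must reproduce O'Hara's decomposition or Zeilberger's algebraic verification in full, not gesture at them.
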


\begin{theorem}\label{444}
Conjecture \ref{conj} is true for $k\le 4$.
\end{theorem}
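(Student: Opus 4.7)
My plan is to prove parts (i), (ii), and (iii) in turn. Part (i) follows from a direct analysis; parts (ii) and (iii) rely on the KOH decomposition (Lemma \ref{koh}), supplemented by explicit coefficient computations for the exceptional cases.

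For (i), I first observe that $\binom{b}{0}_q = 1$ and the exponent $\frac{2(m-b)}{2}+b-4+2$ collapses to $m-2$, so $f(2,m,b) = \binom{m}{2}_q - q^{m-2}$ is independent of $b$. The coefficient of $q^j$ in $\binom{m}{2}_q$ is $\lfloor j/2\rfloor + 1$ on the ascending half, peaking at $q^{m-2}$; subtracting $q^{m-2}$ drops the peak by one. For $m$ even, the coefficient at $q^{m-3}$ is strictly smaller than the peak, so unimodality survives; for $m$ odd, it already equals the peak, so a strict local dip appears. This settles (i).

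For (ii) and (iii), I would write $\binom{m}{k}_q = \sum_{\lambda\vdash k}F_\lambda(q)$ via Lemma \ref{koh}. For $k=3$ this gives three pieces, with $F_{(1,1,1)} = \binom{3m-8}{1}_q$, and for $k=4$ five pieces, with $F_{(2,2)} = q^4\binom{2m-10}{2}_q$ playing a key role. Several admissible $b$ admit a direct identification $q^s\binom{b}{k-2}_q = F_\lambda$ for some $\lambda$, immediately yielding $f = \sum_{\mu\ne\lambda}F_\mu$, a sum of nonnegative unimodal polynomials all symmetric about $k(m-k)/2$ and hence itself of that form; the clearest instances are $b = 3m-8$ for $k=3$ and $b = 2m-10$ for $k=4$. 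For the remaining admissible $b$, the strategy is to further decompose $\binom{b}{k-2}_q$ via KOH (for $k=4$ just $\binom{b}{2}_q = q^2\binom{b-2}{2}_q + \binom{2b-3}{1}_q$) and cancel each summand against $F_{(1,1,1,1)}$ and a suitable portion of $F_{(2,2)}$ or $F_{(3,1)}$, then verify that the leftover, together with the untouched $F_\mu$'s, remains unimodal.

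The non-unimodality cases I would handle by exhibiting an adjacent pair of coefficients of $f$ that violates unimodality: for $b = 3m-10$ in (ii) the shift is $s=1$ and $f$ starts $1 + 0\cdot q + cq^2 + \cdots$ with $c\ge 1$; the sporadic small $b$'s in (ii) (namely $b=2$ for $m$ even and $b\in\{1,5\}$ or $b=3$ for odd $m$) are checked by computing the coefficients of $f$ at $q^{s-1}, q^s, q^{s+1}$; odd $b$ in (iii) fails a parity/coefficient check near a central degree; and $m=5$ in (iii) is a finite verification for $b\in\{2,4\}$, where $f$ turns out to be either negative or nonunimodal. The main obstacle will be proving unimodality for the middle admissible $b$'s in (ii) and (iii): each individual difference $F_\lambda - q^s\binom{b}{k-2}_q$ typically develops a hole at its own center and fails to be unimodal, so the argument must aggregate several $F_\mu$'s and show they collectively fill in the hole. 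Making this aggregation precise will rely on the trapezoidal shape of the convolutions of $\binom{\cdot}{1}_q$'s and $\binom{\cdot}{2}_q$'s appearing in the KOH pieces, and that is where the technical core of the proof lives.
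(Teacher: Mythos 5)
Your part (i) is correct and essentially identical to the paper's argument. The observation that $q^{(3m-b-8)/2}\binom{3m-8}{1}_q=F_{(1,1,1)}$ and $q^{2m-b-6}\binom{2m-10}{2}_q=F_{(2,2)}$, so that $f$ is a sum of the remaining KOH pieces for those two values of $b$, is a nice and correct remark (it is the same mechanism the paper exploits in its Theorem \ref{2k-6}), and your catalogue of which $(m,b)$ should fail matches the conjecture. But for everything else the proposal has a genuine gap, which you yourself flag: the step ``aggregate several $F_\mu$'s and show they collectively fill in the hole'' is not an outline of a proof, it \emph{is} the proof, and nothing in the proposal supplies it. Concretely, both the positive direction for all the generic admissible $b$ and the exact identification of the failing $b$ (including the sporadic ones: for $b=1$, say, you must decide whether $\binom{m}{3}_q$ strictly increases into its middle degree, which is exactly the $m\equiv 1\pmod 4$ dichotomy) reduce to knowing \emph{precisely} in which degrees the truncated first differences of $\binom{m}{3}_q$ and $\binom{m}{4}_q$ vanish. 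Your plan contains no mechanism for pinning these plateaus down, and ``computing the coefficients of $f$ at $q^{s-1},q^s,q^{s+1}$'' hides the fact that these are central coefficients of a $q$-binomial, for which you would still need a formula or an argument.

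The paper closes this gap by a route that is simpler than the one you sketch, and which you miss by heading straight into KOH cancellations: since $\binom{b}{1}_q$ is a single run of consecutive powers and $(1-q)q^{2m-b-6}\binom{b}{2}_q=\sum_i q^{(2m-b-6)+2i}$, the truncated first difference of the subtracted term is a single spike (for $k=3$) or a set of spikes of one fixed parity (for $k=4$). Hence the entire problem collapses to locating the degrees where $(1-q)\binom{m}{3}_q$, respectively $(1-q)\binom{m}{4}_q$, has a zero coefficient. For $k=3$ the paper gets this by iterating KOH $\lfloor m/4\rfloor$ times to obtain the explicit expression $(1-q)\binom{m}{3}_q=\epsilon q^{(3m-9)/2}+\sum_{i=0}^{c-1}\bigl(q^{6i}+q^{6i+2}+q^{6i+3}+\cdots+q^{m+2i-3}\bigr)$, whose zero coefficients are exactly the list in the statement of part (ii); for $k=4$ it invokes known growth facts (strict increase at every even degree $\le 2m-8$, constancy from $2m-10$ to $2m-9$), which immediately gives ``unimodal iff $b$ even'' for $m\ge 6$. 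To salvage your proposal you would need to either carry out an analogous explicit computation of these first differences, or genuinely perform the aggregated KOH cancellation you describe --- which, once written down, amounts to the same computation.
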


\begin{proof}
Set $$\binom{m}{k}_q=\sum_{i=0}^{k(m-k)}a_iq^i$$
and
$$q^{\frac{k(m-b)}{2}+b-2k+2}\cdot\binom{b}{k-2}_q=\sum_{i=0}^{k(m-k)}b_iq^i.$$
(Thus, in particular, $b_i=0$ for $i<\frac{k(m-b)}{2}+b-2k+2$.) For the sake of simplicity, we will identify, with some slight abuse of notation, the first difference of a symmetric polynomial with its truncation after the middle degree; hence $(1-q) \binom{m}{k}_q$ will denote the polynomial $1+\sum_{i=1}^{\left\lfloor k(m-k)/2\right\rfloor}(a_i-a_{i-1})q^i$, and similarly,
$$(1-q)q^{\frac{k(m-b)}{2}+b-2k+2}\cdot\binom{b}{k-2}_q=q^{\frac{k(m-b)}{2}+b-2k+2}+\sum_{i=\frac{k(m-b)}{2}+b-2k+3}^{\left\lfloor
  k(m-k)/2\right\rfloor}(b_i-b_{i-1})q^i.$$ 

It is clear that showing the theorem is tantamount to proving
$$(1-q)\binom{m}{k}_q\ge (1-q)q^{\frac{k(m-b)}{2}+b-2k+2}\cdot\binom{b}{k-2}_q,$$
where the partial order on polynomials is defined by setting $\sum
\alpha_iq^i \ge \sum \beta_iq^i$  whenever $\alpha_i \ge \beta_i$ for
all $i$.\\ 
\\
(i) The case $k=2$ is trivial. We have
 $$ f(2,m,b)(q)=\binom{m}{2}_q-q^{m-2}, $$
which is independent of $b$. Note that
$\binom{m}{2}_q=\sum_{i=0}^{2m-4}a_iq^i$ satisfies $a_i=\lfloor
(i+2)/2\rfloor$ for all $i\le m-2$. Therefore, $(1-q)\binom{m}{2}_q$
equals 0 in the odd degrees and 1 in the even degrees. Since the first
difference of $q^{m-2}$ (defined up until degree $m-2$) is clearly 1
in degree $m-2$ and 0 elsewhere, it immediately follows that
$f(2,m,b)$ is unimodal if and only if $m-2$ is even.\\ 
\\
(ii) Let $k=3$. We have:
\begin{equation}\label{3}
f(3,m,b)(q)=\binom{m}{3}_q-q^{\frac{3m-b-8}{2}}\cdot \binom{b}{1}_q,
\end{equation}
for $m\ge 3$, $1\le b\le 3m-8$, and $b\equiv m$ (mod 2). Note that
(\ref{3}) can be rewritten as 
$$\binom{m}{3}_q-\left(q^{\frac{3m-b-8}{2}}+q^{\frac{3m-b-6}{2}}+\dots+q^{\frac{3m+b-10}{2}}\right).$$ 

It follows that $f(3,m,b)$ is unimodal whenever $\binom{m}{3}_q$ strictly increases from degree $(3m-b-10)/2$ to $(3m-b-8)/2$. Note that this never happens when $(3m-b-8)/2=1$, i.e., $b=3m-10$.

It is now easy to see that the theorem is proven if we show that $\binom{m}{3}_q$ does not strictly increase (i.e., it is constant) from degree $j-1$ to $j$ precisely for the following values of $j$ in the range $2\le j\le (3m-9)/2$: 
$$j=(3m-10)/2 {\ }{\ }{\ }\text{if $m$ is even};$$
\begin{equation}\label{i}
j=(3m-9)/2 {\ }\text{and}{\ } (3m-13)/2 {\ }{\ }{\ }\text{if $m\equiv 1$ (mod 4)};
\end{equation}
$$j=(3m-11)/2 {\ }{\ }{\ }\text{if $m\equiv 3$ (mod 4)}.$$

The $q$-binomials $\binom{m}{3}_q$ are fairly well understood (we even
know explicit symmetric chain decompositions for the corresponding
Young lattice $L(3,m-3)$; see e.g. \cite{lind}), so there are
several ways to prove (\ref{i}). Given that we are going to employ the
KOH theorem extensively later on, it is illustrative to give a proof
using Theorem \ref{koh}. We first note that  $\binom{m}{3}_q$ can be
decomposed as  
\begin{equation}\label{3333}
\binom{m}{3}_q=q^6 \binom{m-4}{3}_q+q^2\binom{m-4}{1}_q\binom{2m-7}{1}_q+\binom{3m-8}{1}_q ,
\end{equation}
where the first summand on the right side corresponds to the partition $(3)$ of 3, the second to $(2,1)$, and the third to $(1,1,1)$.

We next iterate Theorem \ref{koh} a total of $c=\lfloor m/4 \rfloor$
times on the right side of (\ref{3333}), noting that
$\binom{m-4c}{3}_q=0$ unless $m\equiv 3$ (mod 4), in which case
$\binom{m-4c}{3}_q=\binom{3}{3}_q=1$. We obtain: 
\begin{equation}\label{c}
\binom{m}{3}_q= \epsilon q^{(3m-9)/2}+\sum_{i=0}^{c-1} \left(
q^{6i+2}\binom{m-4i-4}{1}_q \binom{2m-8i-7}{1}_q +
q^{6i}\binom{3m-12i-8}{1}_q\right),
\end{equation}
where $\epsilon=1$ for $m\equiv 3$ (mod 4) and $\epsilon=0$ otherwise.

Let us now compute the first difference $(1-q)\binom{m}{3}_q$ using
(\ref{c}). Since $(6i+2)+(2m-8i-7)$ and $6i+(3m-12i-8)$ are both
greater than $(3m-9)/2$ for $i\le c-1$, and $(1-q)\binom{m}{3}_q$ is
truncated after the middle degree $(3m-9)/2$, we have: 
$$(1-q)\binom{m}{3}_q$$$$=\epsilon q^{(3m-9)/2}+
\sum_{i=0}^{c-1}\left((1-q)q^{6i+2}\binom{m-4i-4}{1}_q \cdot
\frac{1-q^{2m-8i-7 }}{1-q}+(1-q)q^{6i}\cdot
\frac{1-q^{3m-12i-8}}{1-q}\right)$$ 
$$=\epsilon
q^{(3m-9)/2}+\sum_{i=0}^{c-1}
\left( q^{6i+2}(1+q+\dots+q^{m-4i-5})+q^{6i}\right)$$
 $$=\epsilon q^{(3m-9)/2}+\sum_{i=0}^{c-1}
 \left(q^{6i}+q^{6i+2}+q^{6i+3}+\dots+q^{m+2i-3}\right).$$  

It is now a simple exercise for the reader to check that the degrees
$j\le (3m-9)/2$ where the last displayed summation has coefficient
zero are exactly the values of $j$ indicated in (\ref{i}). This
completes the proof for $k=3$.\\ 
\\
(iii) Let $k=4$. We have:
$$f(4,m,b)(q)=\binom{m}{4}_q-q^{2m-b-6}\cdot \binom{b}{2}_q,$$
where $m\ge 4$ and $2\le b\le 2m-6.$ We want to show that
$$(1-q)\binom{m}{4}_q\ge (1-q)q^{2m-b-6}\cdot \binom{b}{2}_q$$
(where as usual the polynomials on both sides are set to be zero after degree $2m-8$).

One moment's though gives that
\begin{equation}\label{44}
(1-q)q^{2m-b-6}\cdot \binom{b}{2}_q=\sum_{i=0}^{\left\lfloor \frac{b-2}{2}\right\rfloor}q^{(2m-b-6)+2i}.
\end{equation}

It is easy to check the result directly for $m\le 5$ (in particular, notice that for $m=5$ and $b=4$, $f(4,5,4)(q)=-q^2$ has a negative coefficient). Thus, assume $m\ge 6$. We want to determine when $\binom{m}{4}_q$ strictly increases from degree $(2m-b-6)+2i-1$ to $(2m-b-6)+2i$, for all $0\le i\le \left\lfloor \frac{b-2}{2}\right\rfloor$.  

The growth of $\binom{m}{4}_q$ can be studied in a few different ways (for instance, using our own \cite{SZ}, Lemma 2.1; or again via the KOH theorem; or, perhaps most instructively, using a symmetric chain decomposition for $L(4,m-4)$ \cite{west}). In particular, it can be seen that $\binom{m}{4}_q$  strictly increases from degree $j-1$ to $j$ for all even values of $j\le 2m-8$, and that it is always constant from degree $2m-10$ to $2m-9$. By (\ref{44}), this easily gives that for $m\ge 6$, $f(4,m,b)$ is nonnegative and unimodal if and only if $b$ is even, as desired.
\end{proof}

Our next result is an especially elegant application of the KOH theorem.

\begin{theorem}\label{2k-6}
Let $k\ge 4$, and $m$ and $b$ be as in Conjecture \ref{conj}. Assume that $f(k,m,b)$ is nonnegative and unimodal. Then $f(k,m,b-(2k-6))$ is also nonnegative and unimodal.
\end{theorem}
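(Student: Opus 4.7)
My plan is to apply Lemma \ref{koh} to the inner $q$-binomial $\binom{b}{k-2}_q$ and isolate the contribution of the one-row partition $\lambda = (k-2) \vdash k-2$. Running KOH with $A := b-k+2$ and with $k-2$ in place of $k$, the partition $\lambda = (k-2)$ has $Y_0 = 0$ and $Y_j = k-2$ for every $j \ge 1$, and its only nontrivial factor occurs at $j = 1$. This yields
$$F_{(k-2)}(q) \;=\; q^{2\binom{k-2}{2}} \binom{A + 2 - (k-2)}{k-2}_q \;=\; q^{(k-2)(k-3)}\binom{b-(2k-6)}{k-2}_q.$$
Collecting the remaining partitions into $R(q) := \sum_{\mu \vdash k-2,\, \mu \ne (k-2)} F_{\mu}(q)$, Lemma \ref{koh} gives the decomposition
$$\binom{b}{k-2}_q \;=\; q^{(k-2)(k-3)}\binom{b-(2k-6)}{k-2}_q \;+\; R(q).$$

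A direct computation shows that the $q$-shift prefixing $\binom{b-(2k-6)}{k-2}_q$ in the definition of $f(k,m,b-(2k-6))$ exceeds the $q$-shift prefixing $\binom{b}{k-2}_q$ in the definition of $f(k,m,b)$ by exactly
$$\tfrac{k(2k-6)}{2} - (2k-6) \;=\; (k-2)(k-3),$$
which is precisely the power of $q$ appearing in $F_{(k-2)}$. Substituting the above decomposition into $f(k,m,b)$ and rearranging, I obtain the clean identity
$$f(k,m,b-(2k-6))(q) \;=\; f(k,m,b)(q) \;+\; q^{\,\frac{k(m-b)}{2}+b-2k+2}\, R(q).$$

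It then suffices to verify that $q^{\frac{k(m-b)}{2}+b-2k+2}\, R(q)$ is nonnegative, unimodal, and symmetric about $k(m-k)/2$, for then $f(k,m,b-(2k-6))$ becomes the sum of two nonnegative unimodal polynomials sharing a common center of symmetry, which is itself nonnegative and unimodal. By Lemma \ref{koh} each $F_{\mu}$ is a product of symmetric unimodal $q$-binomials times a pure power of $q$, hence symmetric and unimodal about the center of $\binom{b}{k-2}_q$, namely $(k-2)(b-k+2)/2$. Since all $F_{\mu}$ share this same center, so does their sum $R(q)$; a short algebraic check then gives
$$\frac{k(m-b)}{2}+b-2k+2 \;+\; \frac{(k-2)(b-k+2)}{2} \;=\; \frac{k(m-k)}{2},$$
so the shifted remainder is centered exactly where we need it.

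The argument is essentially bookkeeping: identifying the unique KOH summand that realizes the reduction $b \mapsto b - (2k-6)$, and verifying the two numerical identities that make its $q$-shift and its center of symmetry align with those of $f(k,m,b)$. I therefore do not expect any substantive obstacle; the theorem is in effect a structural observation about how the single-row KOH piece of $\binom{b}{k-2}_q$ interacts with the definition of $f(k,m,b)$.
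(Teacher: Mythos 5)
Your proposal is correct and follows essentially the same route as the paper: both isolate the single-row partition $(k-2)$ in the KOH decomposition of $\binom{b}{k-2}_q$, observe that its $q$-shift $2\binom{k-2}{2}=(k-2)(k-3)$ exactly matches the change in the prefactor when $b\mapsto b-(2k-6)$, and conclude using the fact that the remaining KOH terms are nonnegative, unimodal, and symmetric about the common center. The paper phrases the conclusion via domination of truncated first differences rather than your additive identity $f(k,m,b-(2k-6))=f(k,m,b)+q^{\frac{k(m-b)}{2}+b-2k+2}R(q)$, but these are equivalent formulations of the same argument.
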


\begin{proof}
We want to show that if
$$(1-q)\binom{m}{k}_q\ge (1-q)q^{\frac{k(m-b)}{2}+b-2k+2}\cdot\binom{b}{k-2}_q,$$
then $(1-q)\binom{m}{k}_q$ also dominates
$$ (1-q)q^{\frac{k(m-(b-2k+6))}{2}+(b-2k+6)-2k+2}\cdot\binom{b-2k+6}{k-2}_q=(1-q)q^{\frac{k(m-b)}{2}+b-2k+2}\cdot q^{2\binom{k-2}{2}}\cdot\binom{b-2k+6}{k-2}_q.$$
To this end, it suffices to show
$$(1-q)q^{\frac{k(m-b)}{2}+b-2k+2}\cdot\binom{b}{k-2}_q \ge (1-q)q^{\frac{k(m-b)}{2}+b-2k+2}\cdot q^{2\binom{k-2}{2}}\cdot\binom{b-2k+6}{k-2}_q,$$
or equivalently,
\begin{equation}\label{delta}
(1-q)\binom{b}{k-2}_q \ge (1-q) q^{2\binom{k-2}{2}}\cdot\binom{b-2k+6}{k-2}_q.
\end{equation}

Consider the KOH decomposition of $\binom{b}{k-2}_q=\binom{(b-k+2)+(k-2)}{k-2}_q$, as in Theorem \ref{koh}. The term corresponding to the partition $(k-2)$ of $k-2$ is given by
$$q^{2\binom{k-2}{2}}\cdot \binom{(b-k+4)-(k-2)}{k-2}_q=q^{2\binom{k-2}{2}}\cdot \binom{b-2k+6}{k-2}_q.$$

Thus, $\binom{b}{k-2}_q$ decomposes as:
$$\binom{b}{k-2}_q=q^{2\binom{k-2}{2}}\cdot \binom{b-2k+6}{k-2}_q+\sum_{\lambda \neq (k-2)}F_{\lambda}(q),$$
where the sum on the right side is indexed over all partitions $\lambda \neq (k-2)$ of $k-2$. The crucial observation is that all the  $F_{\lambda}(q)$ are also unimodal and symmetric about the same degree, $(k-2)(b-k+2)$, and therefore their first differences are nonnegative.

We conclude that
$$(1-q)\binom{b}{k-2}_q=(1-q)q^{2\binom{k-2}{2}}\cdot \binom{b-2k+6}{k-2}_q+(1-q)\sum_{\lambda \neq (k-2)}F_{\lambda}(q)$$$$\geq (1-q) q^{2\binom{k-2}{2}}\cdot\binom{b-2k+6}{k-2}_q,$$
which is precisely (\ref{delta}).
\end{proof}

We next show our conjecture for $k=5$. Note that since our argument will explicitly assume $m\ge 20$ (and there exist only finitely many polynomials $f(5,m,b)$ when $5\le m\le 19$, all of which can easily be computed), this will completely characterize the nonnegativity and unimodality of $f(k,m,b)$ also for $k=5$.

\begin{theorem}
Conjecture \ref{conj} is true for $k=5$.
\end{theorem}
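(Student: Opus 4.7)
My plan has three steps. First, I apply Theorem \ref{2k-6} to shrink the problem: successive valid $b$'s differ by $2$ (from $b\equiv m\pmod{2}$), while Theorem \ref{2k-6} reduces $b$ by $2k-6=4$ for $k=5$, so the conjecture for all admissible $b$ reduces to verifying it for just the two largest valid values of $b$, skipping the exceptional value $b=\frac{5m-18}{3}$ whenever it is a valid integer (this is exactly where $f(5,m,b)$ fails to be unimodal, by the remark following Conjecture \ref{conj}). Splitting on $m\pmod{6}$ identifies these two base values explicitly and produces a finite list of cases.

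Second, in each base case I use the KOH theorem on both sides of
$$(1-q)\binom{m}{5}_q \ \ge\ (1-q)\,q^{\frac{5(m-b)}{2}+b-8}\binom{b}{3}_q.$$
On the right, iterating Lemma \ref{koh} to expand $\binom{b}{3}_q$, in direct analogy with (\ref{3333})--(\ref{c}), writes the shifted first difference as an explicit finite sum of monomials and one-line Gaussian factors. On the left, I would KOH-expand $\binom{m}{5}_q=\binom{(m-5)+5}{5}_q$ and keep only the summands $F_\lambda$ that supply coverage in the relevant degree range — plausibly those indexed by the partitions $(1^5)$, $(2,1^3)$, and $(2,2,1)$ of $5$; because every $F_\lambda$ is unimodal and symmetric about $5(m-5)/2$, each contributes nonnegatively to $(1-q)\binom{m}{5}_q$, so discarding the remaining summands only strengthens the needed inequality. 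Iterating KOH on the retained terms, as in the derivation of (\ref{c}), then produces an explicit polynomial lower bound for the left-hand side.

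Third, I would compare the two resulting explicit sums monomial by monomial. The hypothesis $m\ge 20$ is used to guarantee that the various $\binom{\cdot}{1}_q$ factors appearing in the KOH expansions are long enough for the inequalities to hold uniformly in $m$ within each residue class mod $6$, after which the small range $5\le m\le 19$ — being finite — is dispatched by direct computation, completing the characterization of nonnegativity and unimodality of $f(5,m,b)$ claimed by the theorem.

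The main obstacle I expect is this final monomial comparison. Unlike the $k=4$ case, where $(1-q)\,q^{2m-b-6}\binom{b}{2}_q$ collapses to a sum of distinct monomials by (\ref{44}), the first difference of $\binom{b}{3}_q$ carries nontrivial coefficient growth, so matching the KOH contributions on the left with the exact multiplicities on the right requires careful bookkeeping near the middle degrees — especially for the topmost $b$, where the two polynomials $(1-q)\binom{m}{5}_q$ and the shifted first difference of $\binom{b}{3}_q$ sit closest together and leave least room for slack.
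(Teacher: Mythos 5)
Your overall architecture matches the paper's exactly: reduce via Theorem \ref{2k-6} to the two largest admissible $b$ in each residue class of $m$ modulo $6$ (twelve cases), iterate KOH on the $\binom{b}{3}_q$ side to get an explicit formula for its shifted first difference, and dominate it by the first difference of selected KOH summands of $\binom{m}{5}_q$. But the concrete choice you make at the decisive step would fail. The summands you propose to retain, indexed by $(1^5)$, $(2,1^3)$, and $(2,2,1)$, are (up to a power of $q$) of the form $\binom{a}{1}_q$ or $\binom{a}{1}_q\binom{a'}{1}_q$; each such product has first difference at most $1$ in every degree, so the three together contribute at most $3$. Meanwhile the first difference of the shifted $\binom{b}{3}_q$ grows linearly: in the paper's case (\ref{n1}) one computes $d_i=\lfloor (i+2)/6\rfloor$ up to roughly degree $10n-7$, so $d_i$ reaches about $5n/3$, which exceeds $3$ already for $n\ge 2$. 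The paper instead uses the summand indexed by $(3,2)$, namely $q^8\binom{6n-8}{1}_q\binom{12n-14}{2}_q$, precisely because the factor $\binom{12n-14}{2}_q$ has a first difference that itself grows like $i/2$ and hence can dominate $d_i$ in all degrees $i\ge 8$ (low degrees being checked directly).

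A second point your sketch does not anticipate: in three of the twelve cases ((\ref{n4}) with $n$ odd, (\ref{n8}) with $n$ even, (\ref{n12}) with $n$ odd) the $(3,2)$ contribution alone fails, by exactly $1$, at the middle degree, and the paper must bring in the $(4,1)$ summand $q^{12}\binom{12t-1}{3}_q\binom{24t-1}{1}_q$ and a symmetry argument to show its first difference is positive there. So while your plan is the right skeleton and you correctly flag the final comparison as the delicate step, the specific selection of KOH terms is the whole content of the proof, and the selection you guessed does not work; you need a summand containing a $\binom{\cdot}{2}_q$ factor to match the linear growth of the right-hand side, plus a separate device at the middle degree in three of the cases.
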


\begin{proof}
We will show that, for all $m\ge 20$ and $3\le b\le (5m-16)/3$ such that $b-m$ is even, 
$$f(5,m,b)(q)=\binom{m}{5}_q-q^{\frac{5m-3b-16}{2}}\cdot \binom{b}{3}_q$$
is nonnegative and unimodal, where for $m\equiv 0$ (mod 3) we further assume that $b\neq (5m-18)/3$, i.e., $b\le (5m-21)/3$.

Note that by Theorem \ref{2k-6}, since $2k-6=4$, for each $m$ it is
enough to show the theorem for the two largest values of $b$
satisfying the above conditions. More explicitly, standard
computations give that, if we write $m=6n+j\ge 20$ according to its
residue class modulo 6, we can reduce the problem to proving the nonnegativity and 
unimodality of the following twelve symmetric $q$-binomial
differences: 

\begin{equation}\label{n1}
\binom{6n}{5}_q-q^4 \binom{10n-8}{3}_q
\end{equation}

\begin{equation}\label{n2}
\binom{6n}{5}_q-q^7 \binom{10n-10}{3}_q
\end{equation}

\begin{equation}\label{n3}
\binom{6n+1}{5}_q-q^2 \binom{10n-5}{3}_q
\end{equation}

\begin{equation}\label{n4}
\binom{6n+1}{5}_q-q^5 \binom{10n-7}{3}_q
\end{equation}

\begin{equation}\label{n5}
\binom{6n+2}{5}_q- \binom{10n-2}{3}_q
\end{equation}

\begin{equation}\label{n6}
\binom{6n+2}{5}_q-q^3 \binom{10n-4}{3}_q
\end{equation}

\begin{equation}\label{n7}
\binom{6n+3}{5}_q-q^4 \binom{10n-3}{3}_q
\end{equation}

\begin{equation}\label{n8}
\binom{6n+3}{5}_q-q^7 \binom{10n-5}{3}_q
\end{equation}

\begin{equation}\label{n9}
\binom{6n+4}{5}_q-q^2 \binom{10n}{3}_q
\end{equation}

\begin{equation}\label{n10}
\binom{6n+4}{5}_q-q^5 \binom{10n-2}{3}_q
\end{equation}

\begin{equation}\label{n11}
\binom{6n+5}{5}_q- \binom{10n+3}{3}_q
\end{equation}

\begin{equation}\label{n12}
\binom{6n+5}{5}_q-q^3 \binom{10n+1}{3}_q.
\end{equation}

We present below the proof of (\ref{n1}), using the KOH theorem. We will show that for any $n\ge 4$ (in fact, the result is true for any $n$),
\begin{equation}\label{casen1}
(1-q)\binom{6n}{5}_q\ge (1-q)q^4 \binom{10n-8}{3}_q,
\end{equation}
where as usual both sides of (\ref{casen1}) are defined up until degree $\lfloor (5(6n-5)/2)\rfloor =15n-13$. 

By Theorem \ref{koh}, we can see that $\binom{6n}{5}_q=\binom{(6n-5)+5}{5}_q$ decomposes as:
$$\binom{6n}{5}_q=q^{20}\binom{6n-8}{5}_q+q^{12}\binom{6n-8}{3}_q\binom{12n-15}{1}_q$$
\begin{equation}\label{7}
+q^8\binom{6n-8}{1}_q\binom{12n-14}{2}_q+q^6\binom{6n-7}{2}_q\binom{18n-18}{1}_q
\end{equation}
$$+q^4\binom{12n-13}{1}_q\binom{18n-18}{1}_q+q^2\binom{6n-6}{1}_q\binom{24n-21}{1}_q+\binom{30n-24}{1}_q,$$
where the seven terms on the right side are all unimodal and symmetric about degree $(30n-25)/2$, and correspond to the following partitions of 5, respectively: $(5)$, $(4,1)$, $(3,2)$, $(3,1,1)$, $(2,2,1)$, $(2,1,1,1)$, and $(1,1,1,1,1)$.

Again by Theorem \ref{koh}, $q^4 \binom{10n-8}{3}_q=q^4 \binom{(10n-11)+3}{3}_q$ decomposes as
\begin{equation}\label{3koh3}
q^4 \binom{10n-8}{3}_q=q^{10}\binom{10n-12}{3}+q^6\binom{10n-12}{1}\binom{20n-23}{1}+q^4\binom{30n-32}{1},
\end{equation}
where the terms on the right side are contributed by, respectively, the partitions $(3)$, $(2,1)$, and $(1,1,1)$ of 3.

By iterating Theorem \ref{koh} a total of $c=\lfloor 5n/2\rfloor -2$ times on the right side of (\ref{3koh3}), similarly to what we did in the proof of Theorem \ref{444} for $k=3$, we eventually obtain:
$$q^4 \binom{10n-8}{3}_q=\sum_{i=0}^{c-1} q^{6i+6}\binom{10n-4i-12}{1}_q\binom{20n-8i-23}{1}_q + q^{6i+4}\binom{30n-12i-32}{1}_q.$$

Likewise, note that both $$(6i+6)+(20n-8i-23)=20n-2i-17$$ and $$(6i+4)+(30n-12i-32)=30n-6i-28$$ are greater than $15n-13$ for $i\le c-1$. Thus,
$$(1-q)q^4\binom{10n-8}{3}_q$$$$=\sum_{i=0}^{c-1}(1-q) q^{6i+6}\left(1+q+\dots+q^{10n-4i-13}\right)\cdot \frac{1-q^{20n-8i-23}}{1-q} +\sum_{i=0}^{c-1}(1-q) q^{6i+4}\cdot \frac{1-q^{30n-12i-32}}{1-q}$$$$=\sum_{i=0}^{c-1}q^{6i+6}\left(1+q+\dots+q^{10n-4i-13}\right)+q^{6i+4}$$
\begin{equation}\label{3delta}
=\sum_{i=0}^{c-1}q^{6i+4}+q^{6i+6}+q^{6i+7}+\dots+q^{10n+2i-7}.
\end{equation}

Now denote by $\sum_{i=0}^{15n-13}d_iq^i$ the summation in (\ref{3delta}). Our goal is to show that
$$(1-q)\binom{6n}{5}_q \ge \sum_{i=0}^{15n-13}d_iq^i.$$
We claim that the first difference of the contribution given by the partition $(3,2)$ to the KOH decomposition (\ref{7}) of $\binom{6n}{5}_q$, namely
$$q^8\binom{6n-8}{1}_q\binom{12n-14}{2}_q,$$
will suffice to dominate $\sum_{i=0}^{15n-13}d_iq^i$, in every degree $i\ge 8$.

In other words, if we let
$$(1-q)q^8\binom{6n-8}{1}_q\binom{12n-14}{2}_q=\sum_{i=0}^{15n-13}c_iq^i,$$
then we want to show that
$$c_i\ge d_i$$
for all $i=8,\dots,15n-13.$ This will prove the theorem, since it is
easy to see directly that $(1-q)\binom{6n}{5}_q$ dominates
(\ref{3delta}) in each degree $i\le 7$ (or, to overkill, one can
employ the first difference of the term
$q^4\binom{12n-13}{1}_q\binom{18n-18}{1}_q$ in those degrees). 

We first determine the $c_i$. Letting $\binom{12n-14}{2}_q=\sum_{i=0}^{24n-32}e_iq^i$, we have:
$$\sum_{i=0}^{15n-13}c_iq^i=(1-q)q^8\binom{6n-8}{1}_q\binom{12n-14}{2}_q$$$$=q^8(1-q^{6n-8})\sum_{i=0}^{24n-32}e_iq^i=(q^8-q^{6n})\sum_{i=0}^{24n-32}e_iq^i$$
(where the last expression is also considered to be zero after degree $15n-13$).

Thus, $c_i$ equals $e_{i-8}$ for $i\le 6n-1$, and it equals $e_{i-8}-e_{i-6n}$ for $i\ge 6n$. Standard computations therefore give us that:
$$c_i=\left\lfloor \frac{i-8+2}{2}\right \rfloor =\left\lfloor \frac{i}{2}\right \rfloor -3, {\ }{\ }\text{for}{\ }8\le i\le 6n-1;  $$
$$c_i=\left\lfloor \frac{i-8+2}{2}\right \rfloor -\left\lfloor \frac{i-6n+2}{2}\right \rfloor =3n-4, {\ }{\ }\text{for}{\ }6n\le i\le 12n-8;$$
$$c_i= \left\lfloor \frac{(24n-32)-(i-8)+2}{2}\right \rfloor-\left\lfloor \frac{i-6n+2}{2}\right \rfloor=15n-12-i, {\ }{\ }\text{for}{\ }12n-7\le i\le 15n-13.$$

Moving on to the coefficients of (\ref{3delta}), an elementary (but careful) computation explicitly determines the $d_i$, yielding:

$$d_i=\left\lfloor \frac{i+2}{6}\right \rfloor, {\ }{\ }\text{for}{\ }i\le 10n-7,{\ }i\not \equiv 5{\ }(\text{mod}{\ } 6);$$
\begin{equation}\label{di}
d_i=\left\lfloor \frac{i+2}{6}\right \rfloor-1, {\ }{\ }\text{for}{\ }i\le 10n-7,{\ }i \equiv 5{\ }(\text{mod}{\ } 6);
\end{equation}
$$d_i=\left\lceil \frac{15n-13-i}{3}\right \rceil, {\ }{\ }\text{for}{\ }10n-6\le i\le 15n-13.$$

It is now a trivial exercise to verify that $c_i\ge d_i$ for all $i=8,\dots,15n-13,$ as we wanted to show.  This proves (\ref{n1}). 

We leave most of the proof of the other eleven cases as an exercise to the interested reader, since except for three of them, the idea is entirely the same, though obviously some of the computations will differ. (Notice that for the two differences that present no shift, (\ref{n5}) and (\ref{n11}), the result is already known as a special case of \cite{Za2}, Theorem 2.4.) 

The three cases that present one substantial difference in the proof are: (\ref{n4}) when $n$ is odd, (\ref{n8}) when $n$ is even, and (\ref{n12}) when $n$ is odd.  Here, the same approach as above, using the term contributed by $(3,2)$ to the KOH decomposition of the corresponding $q$-binomial $\binom{6n+j}{5}_q$, suffices to prove nonnegativity and unimodality all the way up to the middle degree, \emph{except} for the middle degree itself. In particular, in all three cases, the corresponding first difference equals 0, while we would like at least 1. This issue can be solved by also employing the KOH contribution of $(4,1)$. We will outline this explicitly for (\ref{n4}), the other two cases being entirely similar.

Let $n=2t+1$ be odd, with $t\ge 2$. We can see that
$$(1-q)q^5 \binom{10n-7}{3}_q=(1-q)q^5\binom{20t+3}{3}_q$$
equals 1 in the largest degree, $30t+5$ (for instance, this follows from the proof of Theorem \ref{444}, since $20t+3\equiv 3$ (mod 4)). However, when we consider the term contributed by $(3,2)$ to the KOH decomposition of
$$\binom{6n+1}{5}_q=\binom{12t+7}{5}_q,$$
similarly to what we did in the proof of (\ref{n1}), its first difference in degree $30t+5$ turns out to be zero. (In all previous degrees, the desired inequality on the first differences holds.) Thus, we claim that the KOH contribution of the partition $(4,1)$, namely
$$q^{12}\binom{12t-1}{3}_q\binom{24t-1}{1}_q,$$
has a first difference of at least 1 in degree $30t+5$, which will complete the proof.

Setting
$$\binom{12t-1}{3}_q=\sum_{i=0}^{36t-12}\alpha_iq^i$$
and
$$(1-q)q^{12}\binom{12t-1}{3}_q\binom{24t-1}{1}_q=\sum_{i=0}^{30t+5}\delta_iq^i,$$
we obtain:
$$\sum_{i=0}^{30t+5}\delta_iq^i=(1-q)q^{12}\cdot \frac{1-q^{24t-1}}{1-q} \sum_{i=0}^{36t-12}\alpha_iq^i$$$$=\left(q^{12}-q^{24t+11}\right) \sum_{i=0}^{36t-12}\alpha_iq^i$$
(where, as usual, the last expression is also set to be zero after degree $30t+5$). We want to show that $\delta_{30t+5}>0$.

Notice that, by symmetry, $\alpha_i=\alpha_{36t-12-i}$ for all $i$. Therefore,
$$\delta_{30t+5}=\alpha_{30t-7}-\alpha_{6t-6}$$$$=\alpha_{(36t-12)-(30t-7)}-\alpha_{6t-6}=\alpha_{6t-5}-\alpha_{6t-6}.$$

Since $\binom{12t-1}{3}_q$ strictly increases from degree $6t-6$ to $6t-5$ (see for instance the proof of Theorem \ref{444} for $k=3$), the proof is complete.
\end{proof}

\begin{remark}
It seems likely that, with considerably more effort (and very tedious computations), an approach involving the KOH theorem might also work for the case $k=6$ of Conjecture \ref{conj}. For larger values of $k$, a new idea will most likely be necessary. 
\end{remark}

\begin{remark}
While in this paper we focused on what we believe to be a proper level
of generality for symmetric differences of $q$-binomial coefficients
whose denominators differ by 2, as in Reiner-Stanton's original
conjecture, in general  it seems natural to continue to expect
\emph{most} symmetric differences of $q$-binomials to be nonnegative and unimodal.
For the case where the two $q$-binomial coefficients have the same
degree (i.e., no shift by powers of $q$ is required), see the second
author's \cite{Za2}, where it is conjectured that \emph{all} such differences are nonnegative and unimodal. That conjecture is also open in general.  

When it comes to problems of this nature, a significant source of
difficulty --- and of interest --- is that we are still far from
having a complete understanding of the growth of the coefficients of
$\binom{m}{k}_q$, a basic question in this area of combinatorics. For
recent progress in this direction, we refer the reader to some of our
own work or that by Pak and Panova: \cite{PP,PP2,SZ2,Za}. 
\end{remark}

\section{Acknowledgements} This paper was written during a visiting
professorship of the second author in Fall 2017, for which he warmly
thanks the first author and the MIT Math Department. We wish to thank Fran\c{c}ois Bergeron, William Keith, and Greta Panova for various comments. The second author was partially supported by a Simons Foundation grant (\#274577). 


\end{document}